\tikzstyle{vertex}=[circle, draw, inner sep=0pt, minimum size=4pt]
\definecolor{DarkGreen}{rgb}{0.2, 0.6, 0.3}
\newtheorem{theorem}{Theorem}
\newtheorem{lemma}{Lemma}
\newtheorem{cor}{Corollary}
\theoremstyle{definition}
\newtheorem{problem}{Problem}
\begin{document}
\title
{\bf\Large Graphs with minimum degree-based entropy\thanks{The research was supported by the National Natural Science Foundation of China (Nos.~12071370, 12131013 and U1803263).}}
\date{}
\author{\small Yanni Dong${}^{1,2}$, Maximilien Gadouleau${}^{3}$, Pengfei Wan${}^{4,}$\thanks{Corresponding author. E-mail address: pfwan\_yulinu@outlook.com (P. Wan).} , Shenggui Zhang${}^{1,2}$
 \\[2mm]
\small ${}^1 $School of Mathematics and Statistics,\\
\small Northwestern Polytechnical University, Xi'an, Shaanxi 710129, P.R.~China\\
\small ${}^2 $Xi'an-Budapest Joint Research Center for Combinatorics,\\
\small Northwestern Polytechnical University, Xi'an, Shaanxi 710129, P.R.~China\\
\small ${}^3 $Department of Computer Science,\\
\small Durham University, Durham DH1 3LE, UK\\
\small ${}^4 $School of Mathematics and Statistics,\\
\small Yulin University, Yulin, Shaanxi 719000, P.R.~China\\}
\maketitle
\begin{abstract}
The degree-based entropy of a graph is defined as the Shannon entropy based on the information functional that associates the vertices of the graph with the corresponding degrees. In this paper, we study extremal problems of finding the graphs attaining the minimum degree-based graph entropy among graphs and bipartite graphs with a given number of vertices and edges. We characterize the unique extremal graph achieving the minimum value among graphs with a given number of vertices and edges and present a lower bound for the degree-based entropy of bipartite graphs and characterize all the extremal graphs which achieve the lower bound. This implies the known result due to Cao et al. (2014) that the star attains the minimum value of the degree-based entropy among trees with a given number of vertices.

\medskip
\noindent {\bf Keywords:} Complexity measure; Graph entropy; Bipartite graph \\
\noindent {\bf Mathematics Subject Classification:} 05C99, 94A17, 68R10
\smallskip

\end{abstract}

\section{Introduction}

All graphs considered in this paper are simple, finite and undirected. The logarithms here are base $2$. We use the convention that $0 \log 0=0$.

A system that consists of a large number of microscopic components interacting with each other appears to be complex always \cite{Simon}. In order to describe the complexity or the information of a system, various graph entropies were introduced (refer to \cite{Bonchev,Dehmer2016,Dehmer2011} for reviewing). Reshevsky is the pioneer to quantify  the complexity of a system by the so-called topological information content, which is the earliest graph entropy  measurement \cite{Rashevsky}. Since then, many graph entropies based on various graph invariants, such as the number of matchings, independent sets and spanning forests, and the Randi\'c index \cite{Cao2017,Cao,Wan2020,Wan2019}, have been studied. Compared with other entropies, Cao et al. \cite{Cao} proposed an easily computable graph entropy called degree-based entropy, which has been proved useful in information theory, complexity networks and mathematical chemistry \cite{Bollobas2004,Bollobas2012}.

Let $G$ be a graph vertex set $V=\{v_{1},v_{2},\ldots,v_{n}\}$, and let $d_{G}(v_i)$ be the degree of $v_i$ of $G$. The {\em degree-based graph entropy} of $G$ is defined as
\begin{align}\label{eq1}
I_{d}(G)&=-\sum_{i=1}^{n}\frac{d_{G}(v_i)}{\sum_{j=1}^n d_{G}(v_j)}\log\frac{d_{G}(v_i)}{\sum_{j=1}^n d_{G}(v_j)}.
\end{align}

One of the attracting and challeging  problem in the theoretic study of graph entropy is giving extremal values of entropies and determing the correspoding graphs. Cao et al. \cite{Cao} proved some extremal values for the degree-based entropy of some certain families of graphs, such as trees, unicyclic graphs and bicyclic graphs, and proposed conjectures to determine extremal values. Shortly afterwards Ili\'c \cite{Ilic} proved one part of the conjectures. Ghalavand et al. \cite{Ghalavand} applied majorization technique to extend some known results on the extremal values of the degree-based entropy of trees, unicyclic and bicyclic graphs. Dong et al. \cite{Dong} obtained the maximum value of the degree-based entropy among bipartite graphs with a given number of  vertices and edges by characterizing corresponding degree sequences. Motivated by these results on extremal values of degree-based entropy, we study the problem of minimum values of the degree-based entropy among graphs and bipartite graphs with a given number of vertices and edges.

 A graph (resp. bipartite graph) with $n$ vertices and $m$ edges is referred to  as an {\em $(n,m)$-graph} (resp. {\em $(n,m)$-bipartite graph}). A simple graph in which each pair of distinct vertices is joined by an edge is called a {\em complete graph}. Up to isomorphism, there is just one complete graph with $n$ vertices, denoted by $K_n$. A {\em complete bipartite graph} is a simple bipartite graph with bipartition $(X,Y)$ in which each vertex of $X$ is joined to each vertex of $Y$; if $|X|=s$ and $|Y|=t$, such a graph is denoted by $K_{s,t}$. Let $G=(V,E)$ be a simple graph.  The {\em complement} $\overline{G}$ of $G$ is the simple graph whose vertex set is $V(G)$ and whose two distinct vertices are adjacent if and only if they are not adjacent in $G$. The {\em union} of simple graphs $G$ and $H$, denoted by  $G\cup H$, is the graph with the vertex set $V(G)\cup V(H)$ and the edge set $E(G)\cup E(H)$. Let $m$ and $k$ be two integers, and let $k^{*} =\max\{k: {k \choose 2}\leq m\}$ and $t^{*}=m-{{k^{*}}\choose 2}$. Note that the size of a maximum clique of a graph with $m$ edges cannot be more than $k^{*}$. The graph $K(k,t)$ is obtained by adding a vertex adjacent to $t$ vertices of $K_{k}$. Let
$$\sigma(x)=
\left\{\begin{array}{lc}
0,&\;\mathrm{if}\hspace{0.33em}x=0;\hfill\\
1,&\;\mathrm{otherwise.}\hspace{0.33em}\hfill
\end{array}
\right.$$

We characterize the extremal graph achieving the minimum value among $(n,m)$-graphs.

\begin{theorem}\label{th4}
Let $G$ be an $(n,m)$-graph with $n\geq 2$, $1\leq m\leq {n \choose 2}$. Then
\begin{align*}
I_{d}(G)\geq \log(2m)-\frac{t^{*}k^{*}\log k^{*}+(k^{*}-t^{*})(k^{*}-1)\log(k^{*}-1)+t^{*}\log t^{*}}{2m},
\end{align*}
the equality holds if and only if $G \cong K(k^{*},t^{*})\cup \overline{K}_{n-k^{*}-\sigma(t^{*})}.$
\end{theorem}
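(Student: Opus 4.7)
The plan is to convert the minimization of $I_d$ into a maximization of
\begin{equation*}
S(G):=\sum_{v\in V(G)} d(v)\log d(v),\qquad 0\log 0:=0,
\end{equation*}
since $I_d(G)=\log(2m)-\frac{1}{2m}\,S(G)$. The claimed inequality then reads $S(G)\leq t^*k^*\log k^*+(k^*-t^*)(k^*-1)\log(k^*-1)+t^*\log t^*$, whose right-hand side depends only on $m$; this makes induction on $n$ (by peeling off isolated vertices) natural.

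The core tool is an edge-shift: for vertices $u,v,w$ with $u\neq v$, $w\neq u$, $vw\in E(G)$, $uw\notin E(G)$ and $d(u)\geq d(v)\geq 1$, the graph $G'=G-vw+uw$ satisfies $S(G')-S(G)>0$ by strict convexity of $f(x)=x\log x$. Any extremal $(n,m)$-graph $G^*$ therefore admits no edge-shift, which forces the nesting $N(v)\setminus\{u\}\subseteq N(u)$ whenever $d(u)\geq d(v)\geq 1$. Consequently the subgraph $H^*$ on the non-isolated vertices of $G^*$ is a threshold graph, and if $r=|V(H^*)|$ then $2m=\sum_{v\in V(H^*)} d(v)\leq r(r-1)$, whence $r\geq k^*+\sigma(t^*)$.

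I would finish by induction on $n$. At the base $n=k^*+\sigma(t^*)$, every vertex of $G^*$ is non-isolated; a direct analysis of threshold construction sequences with $m$ edges on $k^*+\sigma(t^*)$ vertices, together with a comparison of $S$-values across the finitely many realizations, singles out $K(k^*,t^*)$ with degree sequence $(k^*,\ldots,k^*,k^*-1,\ldots,k^*-1,t^*)$ ($t^*$ copies of $k^*$, $k^*-t^*$ copies of $k^*-1$, and one copy of $t^*$ when $t^*>0$) as the unique maximizer. For $n>k^*+\sigma(t^*)$, the easy case is when $G^*$ contains an isolated vertex, which we delete and close by the inductive hypothesis. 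The remaining case is the main obstacle: if $G^*$ has no isolated vertex, the simple edge-shift is not enough, since stable threshold graphs such as $K_{1,n-1}$ admit no edge-shift yet are far from extremal; so I would invoke a multi-edge swap that removes a bundle of edges incident to the highest-degree vertex and inserts the missing edges of a small clique on some low-degree, pairwise non-adjacent vertices. Verifying that such a swap strictly increases $S$ reduces to a discrete-convexity estimate for $x\log x$ whose validity is controlled by the maximum degree of $G^*$, and the number of swapped edges must be tuned to that maximum degree. Once this step is in place, the forced degree sequence yields the stated formula and uniqueness of the extremal graph.
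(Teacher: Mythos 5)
Your reduction to maximizing $h_d(G)=\sum_v d(v)\log d(v)$ and your edge-shift argument showing that any extremal graph must be a threshold graph are both correct and coincide with the paper's Lemma~\ref{le8} (via Theorem~\ref{le1}). You are also right to flag that the single-edge shift cannot finish the job: threshold graphs such as $K_{1,n-1}$ admit no improving shift, and their degree sequences are not comparable under majorization with that of $K(k^*,t^*)\cup\overline{K}_{n-k^*-\sigma(t^*)}$, so a different mechanism is needed to select the winner among threshold graphs.

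The problem is that this selection step --- which is the actual content of the theorem --- is left unproven at both places where you invoke it. In the base case you assert that ``a direct analysis of threshold construction sequences\dots singles out $K(k^*,t^*)$''; and in the inductive step you posit a ``multi-edge swap'' whose improving property ``reduces to a discrete-convexity estimate'' that you do not state or verify, conceding it is the main obstacle. No concrete argument is given for either, and neither is routine: one must compare $h_d$ across all split partitions $(K,S)$ with $|K|=k$ ranging over $2\le k\le k^*$ and all nested neighborhood patterns of $S$. The paper does exactly this work analytically: writing $h_d(G^*)$ in terms of the clique size $k$, the set $X\subseteq K$ of clique vertices with neighbors in $S$, and the degrees $d_H(x_j)$, it shows via a Lagrangian computation that the degrees $d_H(x_j)$ should be equal, then that the auxiliary functions $p(x)$ and $g(y)$ are strictly increasing, forcing $|X|=\min\{k-1,t\}$ and $k=k^*$, and finally that $t<k^*$ so the optimum is $K(k^*,t^*)$. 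Until you supply an argument of comparable substance for your two asserted steps, the proposal is a correct setup plus a gap precisely where the theorem's difficulty lies. (A further minor point: since isolated vertices contribute nothing to $I_d$, the induction on $n$ is vacuous --- the entire problem is already concentrated in the no-isolated-vertex case, so the induction does not reduce the difficulty at all.)
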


We will use the following theorem that characterizes all the extremal graphs achieving a lower bound among bipartite graphs with a given number of vertices and edges to prove the above theorem.

\begin{theorem}\label{th3}
Let $G$ be an $(n,m)$-bipartite graph with $n\geq 2$ and $1\leq m\leq \lceil\frac{n}{2}\rceil\lfloor\frac{n}{2}\rfloor$. Then $I_{d}(G)\geq 1+\log{\sqrt{m}},$ the equality holds in the inequality if and only if $G\cong K_{q,b}\cup \overline{K}_{n-q-b}$, where $q$ and $b$ satisfy $qb=m$ and $q+b\leq n$.
\end{theorem}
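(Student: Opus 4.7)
The plan is to recast $I_d(G)$ as a sum of Shannon entropies of the two parts of the bipartition and then invoke subadditivity of joint entropy for a uniformly random edge.

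First I would fix a bipartition $(X,Y)$ of $G$ and set $p_v = d_G(v)/m$. Since the sum of degrees on each side of a bipartite graph equals $m$, the restrictions of $(p_v)$ to $X$ and to $Y$ are genuine probability distributions, so the quantities $H(X) := -\sum_{v\in X} p_v \log p_v$ and $H(Y) := -\sum_{v\in Y} p_v \log p_v$ make sense as Shannon entropies. A short manipulation of \eqref{eq1} (using $\log(p_v/2) = \log p_v - 1$ together with $\sum_{v \in X} p_v = \sum_{v \in Y} p_v = 1$) gives
\begin{align*}
I_d(G) = 1 + \tfrac{1}{2}\bigl(H(X) + H(Y)\bigr),
\end{align*}
so the theorem reduces to proving $H(X) + H(Y) \geq \log m$.

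Next I would introduce the key probabilistic interpretation. Sample a uniformly random edge $e = (U,V)$ with $U \in X$ and $V \in Y$. Then $\Pr[U=u] = d_G(u)/m = p_u$, giving $H(U) = H(X)$, and likewise $H(V) = H(Y)$. Since $(U,V)$ is uniform on the $m$ edges, $H(U,V) = \log m$, and the subadditivity bound $H(U,V) \leq H(U) + H(V)$ delivers the inequality. For the equality case, subadditivity is tight iff $U$ and $V$ are independent, which translates into $\Pr[(U,V)=(u,v)] = p_u p_v$ for every pair. The left-hand side is either $1/m$ or $0$; letting $S_X = \{u \in X : d_G(u) > 0\}$ and $S_Y = \{v \in Y : d_G(v) > 0\}$ with $|S_X|=q$, $|S_Y|=b$, this forces every pair in $S_X \times S_Y$ to be an edge, no edges to lie outside, and $d_G(u)d_G(v) = m$ for such pairs; hence $d_G(u) = b$, $d_G(v) = q$, $qb = m$, and $G \cong K_{q,b} \cup \overline{K}_{n-q-b}$ with $q+b\leq n$. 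The converse direction is a direct computation yielding $I_d(G) = \tfrac12\log(2m/b) + \tfrac12\log(2m/q) = 1 + \log\sqrt{m}$.

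The main obstacle, if any, is spotting the probabilistic reformulation; once it is in hand, the proof is essentially a one-line information-theoretic inequality. A purely analytic approach that tries to minimize over degree sequences directly seems considerably more painful and would also need a separate argument for integrality, whereas the entropy viewpoint sidesteps both difficulties and makes the equality characterization almost automatic.
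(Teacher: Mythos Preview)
Your argument is correct and considerably shorter than the paper's. The paper proceeds by first reducing to difference graphs via a swap argument (Lemma~\ref{le7}), then removes a dominating vertex and uses a Lagrange-multiplier computation together with Theorem~\ref{le6} on conjugate degree sequences to force $D(X)=[b^q]$, $D(Y)=[q^b]$; a separate padding argument handles the case $n\le m$. Your route bypasses all of this structure: the identity $I_d(G)=1+\tfrac12\bigl(H(U)+H(V)\bigr)$ for a uniform random edge $(U,V)$ reduces the bound to the single inequality $H(U,V)\le H(U)+H(V)$, and the equality case (independence of $U$ and $V$) immediately forces the non-isolated vertices to span a complete bipartite graph with $qb=m$. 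What the paper's approach buys is some reusable machinery---the reduction to difference/threshold graphs is also used in the proof of Theorem~\ref{th4}---whereas your approach buys brevity, a transparent equality characterisation, and no need to worry separately about integrality or the $n\le m$ regime.
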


The following result due to Cao et al. \cite{Cao} is immediately implied by Theorem \ref{th3}.

\begin{cor}[\cite{Cao}]\label{co2}
Let $T$ be a tree on $n$ vertices. Then $I_{d}(T)\geq 1+\log{\sqrt{n-1}}$, the equality holds in the inequality if and only if $T\cong K_{1,n-1}$.
\end{cor}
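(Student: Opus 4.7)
The plan is to deduce this corollary directly from Theorem \ref{th3} by specializing to the parameters of a tree. First I would note that every tree is bipartite and that a tree on $n$ vertices has exactly $m = n-1$ edges. The admissibility condition of Theorem \ref{th3} then requires $n-1 \leq \lceil n/2\rceil \lfloor n/2\rfloor$, which a short case check on the parity of $n$ confirms for all $n \geq 2$. Plugging $m = n-1$ into the lower bound of Theorem \ref{th3} immediately yields $I_d(T) \geq 1 + \log\sqrt{n-1}$.

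For the equality characterization, I would apply the equality clause of Theorem \ref{th3}: equality forces $T \cong K_{q,b} \cup \overline{K}_{n-q-b}$ for some integers $q, b$ with $qb = n-1$ and $q+b \leq n$. The key observation is that a tree is connected, hence has no isolated vertices, so the $\overline{K}_{n-q-b}$ factor must be empty, i.e.\ $q + b = n$. Combining $qb = n-1$ with $q+b = n$ gives $qb - q - b + 1 = 0$, that is, $(q-1)(b-1) = 0$. Therefore one of $q, b$ equals $1$ and the other equals $n-1$, forcing $T \cong K_{1,n-1}$. Conversely, $K_{1,n-1}$ is indeed a tree satisfying $qb = n-1$ with $q+b = n$, so it realises the bound.

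There is essentially no obstacle here, since the corollary is a direct specialization of Theorem \ref{th3}; the only points requiring care are the elementary verification of $n-1 \leq \lceil n/2\rceil\lfloor n/2\rfloor$ and the use of the connectedness of $T$ to rule out the isolated-vertex part of the extremal configuration.
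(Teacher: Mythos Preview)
Your proposal is correct and matches the paper's approach exactly: the paper simply states that the corollary is ``immediately implied by Theorem~\ref{th3}'' without giving any further details, and your argument is precisely the natural way to fill in that implication. The only thing you add beyond what the paper makes explicit is the clean use of connectedness plus the factorization $(q-1)(b-1)=0$ to pin down the equality case, which is indeed the intended (and essentially forced) reasoning.
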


The rest of this paper is organized as follows. In Section 2, we give some preliminary results that will be used later. In Section 3, we give the proofs of our main results.  We conclude this paper with a problem and some remarks.

\section{Preliminaries}

Let $G=(V,E)$ be a graph, and let $a_i$ be the number of vertices with degree $d_i$ in a set $S \subseteq V(G)$ for $i=1,2,\ldots,t+1$. Denote by $D(S)=[d_{1}^{a_1},d_{2}^{a_{2}},\ldots,d_{t}^{a_t}]$ the degree sequence of $S$ in which $d_{1}>d_{2}>\cdots >d_{t+1}=0$ and $a_1+a_2+\cdots+a_{t+1}=|S|$. By $D(G)$ denote the degree sequence of $V(G)$. Every graph with the degree sequence $D$ is a {\em realization} of $D$. A degree sequence is {\em unigraphic} if all its realizations are isomorphic. We use $\Delta(S)$ and $\Delta (G)$ to denote the maximum degree of vertices in a set of vertices $S$ and a graph $G$, respectively. The size of a maximum clique in $G$ is denoted by $\omega(G)$.

Let $G$ be a graph with $m$ edges, and let $V=\{v_{1},v_{2},\ldots,v_{n}\}$ be the vertex set of $G$. By the hand shaking lemma,
\begin{align*}
 \sum_{j=1}^n d_{G}(v_j)=2m.
\end{align*}
From Equality (\ref{eq1}), we infer
\begin{align*}
I_{d}(G)&=\log \sum_{j=1}^n d_{G}(v_j)-\frac{1}{\sum_{j=1}^n d_{G}(v_j)}\sum_{i=1}^n d_{G}(v_i)\log d_{G}(v_i)\\
        &=\log (2m)-\frac{1}{2m}\sum_{i=1}^n d_{G}(v_i)\log d_{G}(v_i).
\end{align*}
We define a function
\begin{align*}\label{eq2}
h_{d}(G)=\sum_{i=1}^n d_{G}(v_i)\log d_{G}(v_i).
\end{align*}
Equation (\ref{eq1}) implies that $\min\limits_{G \in \mathcal{G}} I_{d}(G)=\log (2m)-\frac{1}{2m}\max\limits_{G \in \mathcal{G}} h_{d}(G)$ for a certain family of graphs $\mathcal{G}$.

Let us introduce the concept of majorization as a useful tool to solve some inequality problems \cite{Matshall}.
Let $A=[a_1,a_2,\ldots,a_n]$ and $B=[b_1,b_2,\ldots,b_n]$ be integer sequences of length $n$. We define that $A$ {\em majorizes} $B$, denoted by $A\succeq B$, if
\begin{align*}
\sum_{i=1}^s a_i\geq \sum_{i=1}^s b_i,~~ s=1,2,\ldots,n-1
\end{align*}
and
\begin{align*}
\sum_{i=1}^n a_i= \sum_{i=1}^n b_i.
\end{align*}
The majorization is {\em strict}, and is denoted by $A\succ B$, if at least one of the inequalities is strict. Let $c_1,c_2,\ldots,c_n$ be integers satisfying $c_1\geq c_2\geq\cdots \geq c_n\geq 0$.

Ghalavand et al. \cite{Ghalavand} extended some known results about the minimum and maximum values of the degree-based entropy by the following theorem.

\begin{theorem}[\cite{Ghalavand}]\label{le1}
Let $G$ and $G'$ be two $(n,m)$-graphs. If $D(G)\succeq D(G')$ , then $I_{d}(G)\leq I_{d}(G')$, the equality holds in this inequality if and only if $D(G)=D(G')$.
\end{theorem}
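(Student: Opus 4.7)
The plan is to reduce the stated inequality to a standard Schur-convexity fact about the function $\phi(x)=x\log x$. As already recorded earlier in the excerpt,
\[
I_d(G)=\log(2m)-\frac{1}{2m}\,h_d(G),\qquad h_d(G)=\sum_{i=1}^n d_G(v_i)\log d_G(v_i),
\]
and both $G$ and $G'$ have exactly $m$ edges. Hence $I_d(G)\leq I_d(G')$ is equivalent to $h_d(G)\geq h_d(G')$, and the equality cases correspond. The entire theorem therefore reduces to proving: if $D(G)\succeq D(G')$, then $h_d(G)\geq h_d(G')$, with equality iff the sorted degree sequences coincide.

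The first key step is to observe that $\phi(x)=x\log x$ (with the convention $0\log 0=0$) is strictly convex on $[0,\infty)$, since $\phi''(x)=1/(x\ln 2)>0$ for $x>0$. The second step is to invoke Karamata's inequality (equivalently, Hardy--Littlewood--P\'olya): for any two nonnegative sequences $A=(a_1,\ldots,a_n)$ and $B=(b_1,\ldots,b_n)$ sorted nonincreasingly with $A\succeq B$, one has $\sum_i\phi(a_i)\geq\sum_i\phi(b_i)$ for every convex $\phi$. Applied with the (sorted) sequences $A=D(G)$ and $B=D(G')$ this yields $h_d(G)\geq h_d(G')$, and hence $I_d(G)\leq I_d(G')$.

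For the equality case I would use the strict convexity of $\phi$ together with a Muirhead-style decomposition: if $D(G)\succeq D(G')$ but $D(G)\neq D(G')$, then $D(G)$ is obtained from $D(G')$ by a finite sequence of unit transfers of the form $(\ldots,c_i,\ldots,c_j,\ldots)\mapsto(\ldots,c_i+1,\ldots,c_j-1,\ldots)$ with $c_i\geq c_j$; strict convexity of $\phi$ gives $\phi(c_i+1)+\phi(c_j-1)>\phi(c_i)+\phi(c_j)$, so each such step strictly increases $h_d$, and therefore $h_d(G)>h_d(G')$ whenever $D(G)\neq D(G')$. The main obstacle is really only cosmetic: there is no genuinely hard graph-theoretic step, only the need to set up Karamata cleanly and to verify that strict convexity produces strict inequality in precisely the non-equal case. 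No structural argument beyond the handshake lemma (already used to rewrite $I_d$ in terms of $h_d$) is required.
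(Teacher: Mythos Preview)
Your argument is correct. The paper itself does not prove this theorem at all: it is quoted as a known result from Ghalavand et al.\ and is used purely as a tool in Lemmas~\ref{le8} and~\ref{le7}. So there is no ``paper's own proof'' to compare against, and what you have written supplies exactly the justification the paper omits.

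Substantively your route is the standard one. The reduction $I_d(G)=\log(2m)-\tfrac{1}{2m}h_d(G)$ is precisely how the paper sets things up in Section~2, so the problem is indeed equivalent to $h_d(G)\ge h_d(G')$. Strict convexity of $\phi(x)=x\log x$ on $[0,\infty)$ together with Karamata/Hardy--Littlewood--P\'olya gives the inequality immediately. For the equality case you may, if you wish, skip the unit-transfer decomposition entirely and simply invoke the strict form of Karamata's inequality (strictly convex $\phi$ forces strict inequality whenever the sorted sequences differ); this avoids the mild bookkeeping of checking that each transfer keeps the sequence nonnegative and (after re-sorting) nonincreasing. Either way the conclusion is the same, and Marshall--Olkin (reference~\cite{Matshall} in the paper) contains both the Schur-convexity statement and the integer transfer lemma should you want a citation.
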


Threshold graphs have been defined by Chv\'atal and Hammer \cite{Chvatal1977} as follows.
A graph $G$ is called a {\em threshold graph} if there exist non-negative real numbers $w_{1},w_{2},\ldots,w_n$ associated with the vertex set $\{v_1,v_2,\ldots,v_n\}$ of $G$ and $t$ such that for any two distinct vertices $v_i$ and $v_j$, $v_i$ and $v_j$ are adjacent if and only if $w_i+w_j>t$. Let $\mathcal{C}$ be a collection of sets. The collection $\mathcal{C}$ is {\em nested} if for every two sets in $\mathcal{C}$, one is a subset of the other. Some basic characterizations of threshold graphs are given by the following theorem.

\begin{theorem}[\cite{Chvatal1977}]\label{th5}
Let $G=(V,E)$ be a graph. Then the following are equivalent:
\begin{description}
  \item[(a)] the graph $G$ is a threshold graph;
  \item[(b)] there are no four distinct vertices $u,v,w,x \in V(G)$ such that $uv,wx\in E(G)$ and $uw,vx\notin E(G)$;
  \item[(c)] the graph $G$ is a split graph $G(K,S)$ (i.e., its vertices can be partitioned into a clique $K$ and a stable set $S$) and the neighborhoods of the vertices of $S$ are nested.
\end{description}
\end{theorem}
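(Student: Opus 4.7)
The theorem asserts a three-way equivalence, so my plan is to prove the cycle $\mathrm{(a)} \Rightarrow \mathrm{(b)} \Rightarrow \mathrm{(c)} \Rightarrow \mathrm{(a)}$. I expect $\mathrm{(b)} \Rightarrow \mathrm{(c)}$ to be the main obstacle; the other two implications are short weight-counting arguments.

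For $\mathrm{(a)} \Rightarrow \mathrm{(b)}$, I would argue by contradiction. Suppose $G$ admits weights $w_1, \ldots, w_n$ and threshold $t$ witnessing $\mathrm{(a)}$, yet four distinct vertices $u, v, w, x$ satisfy $uv, wx \in E(G)$ and $uw, vx \notin E(G)$. Adding $w_u + w_v > t$ and $w_w + w_x > t$ gives $w_u + w_v + w_w + w_x > 2t$, while adding $w_u + w_w \leq t$ and $w_v + w_x \leq t$ gives $w_u + w_v + w_w + w_x \leq 2t$, which is impossible.

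For $\mathrm{(b)} \Rightarrow \mathrm{(c)}$, the heart of the argument is the following nesting lemma: for any two distinct vertices $u, v \in V(G)$, either $N(u) \setminus \{v\} \subseteq N(v) \setminus \{u\}$ or the reverse inclusion holds. Indeed, if both $N(u) \setminus (N(v) \cup \{v\})$ and $N(v) \setminus (N(u) \cup \{u\})$ contained vertices $w$ and $x$ respectively, then $u, v, w, x$ would be pairwise distinct and, regardless of the statuses of the edges $uv$ and $wx$, a suitable relabeling of these four vertices produces the forbidden configuration of $\mathrm{(b)}$. I would then show that $G$ must contain either an isolated vertex or a dominating vertex: assuming no isolated vertex and picking a maximum-degree vertex $u$ with a putative non-neighbor $z$, a neighbor $y$ of $z$ must lie in $N(u)$ (by nesting applied to $u, z$, since the non-edge $uz$ makes both punctured neighborhoods full), so that $z \in N(y) \setminus \{u\}$; but nesting applied to $u, y$ forces $N(y) \setminus \{u\} \subseteq N(u) \setminus \{y\}$ by $u$'s maximum degree, placing $z \in N(u)$ and contradicting our choice. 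Deleting such a vertex preserves $\mathrm{(b)}$; induction on $|V(G)|$ then yields a split decomposition $(K, S)$ of the smaller graph with nested $S$-neighborhoods, and the deleted vertex rejoins $S$ (if isolated) or $K$ (if dominating), keeping both properties intact.

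For $\mathrm{(c)} \Rightarrow \mathrm{(a)}$, let $G$ be a split graph with clique $K$ and stable set $S = \{s_1, \ldots, s_p\}$ ordered so that $N(s_1) \subseteq \cdots \subseteq N(s_p) \subseteq K$. I would assign small positive weights to $S$-vertices, strictly increasing in the nesting order, and much larger weights to $K$-vertices, staggered according to the least index $r(v)$ for which $v \in N(s_{r(v)})$ so that higher $w_v$ corresponds to smaller $r(v)$. Choosing a threshold $t$ between the regime of $S$-$S$ sums (below) and $K$-$K$ sums (above), and tuning inside the $K$-$S$ range, recovers adjacency exactly. The main technical hurdle of the whole argument is the isolated-or-dominating-vertex step in $\mathrm{(b)} \Rightarrow \mathrm{(c)}$ outlined above.
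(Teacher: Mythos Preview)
The paper does not supply its own proof of this statement: Theorem~\ref{th5} is quoted from Chv\'atal and Hammer~\cite{Chvatal1977} as a known tool and is used without proof. So there is nothing in the paper to compare your argument against.

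That said, your proposed cycle $\mathrm{(a)}\Rightarrow\mathrm{(b)}\Rightarrow\mathrm{(c)}\Rightarrow\mathrm{(a)}$ is correct and is essentially the classical route. A couple of minor remarks. In your nesting lemma for $\mathrm{(b)}\Rightarrow\mathrm{(c)}$, the phrase ``regardless of the statuses of the edges $uv$ and $wx$, a suitable relabeling\ldots'' is accurate but you can be explicit: with $w\in N(u)\setminus(N(v)\cup\{v\})$ and $x\in N(v)\setminus(N(u)\cup\{u\})$, the four vertices $(u,w,x,v)$ in that order already realise the forbidden pattern ($uw,xv\in E$, $ux,wv\notin E$), so no case split on $uv$ or $wx$ is needed. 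In the isolated/dominating step, your use of maximum degree to orient the nesting inclusion is fine once you note that $u\in N(y)$ and $y\in N(u)$, so both punctured neighbourhoods drop exactly one element and the cardinality comparison goes through. Finally, in $\mathrm{(c)}\Rightarrow\mathrm{(a)}$ your sketch can be made concrete by setting $w(s_i)=i$, $w(v)=C-r(v)$ for $v\in K$ with $r(v)=\min\{i:v\in N(s_i)\}$ (or $p+1$ if no such $i$), threshold $t=C-1$, and $C=2p+2$; then all three regimes ($S$--$S$, $K$--$S$, $K$--$K$) check out directly.
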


The family of difference graphs was introduced by Hammer et al. \cite{Hammer1990}.
A graph $G$ is said to be a {\em difference graph} if there exist real numbers $w_{1},w_{2},\ldots,w_n$ associated with the vertex set $\{v_1,v_2,\ldots,v_n\}$ of $G$ and a positive real number $t$ such that
\begin{description}
  \item[(a)] $|w_i|<t$ for $i=1,2,\ldots,n$;
  \item[(b)] two distinct vertices $v_i$ and $v_j$ are adjacent if and only if $|w_i-w_j|\geq t$.
\end{description}

The following result is due to Mahadev and Peled \cite{Mahadev}.

\begin{theorem}[\cite{Mahadev}]\label{le5}
Let $G$ be a bipartite graph with bipartition $(X,Y)$. Then the following conditions are equivalent:
\begin{description}
  \item[(a)] the graph $G$ is a difference graph;
  \item[(b)] there are no $x_1,x_2 \in X$ and $y_1,y_2\in Y$ such that $x_1y_1,x_2y_2 \in E(G)$ and $x_1y_2,x_2y_1 \notin E(G)$;
  \item[(c)] every induced subgraph without isolated vertices has on each side of the bipartition a domination vertex, that is, a vertex is adjacent to all the vertices on the other side of the bipartition.
\end{description}
\end{theorem}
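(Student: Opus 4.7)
The plan is to establish the three-way equivalence via the cycle (a)$\Rightarrow$(b)$\Rightarrow$(c)$\Rightarrow$(a). Throughout, let $\alpha,\beta,\gamma,\delta$ denote the weights of $x_1,x_2,y_1,y_2$ in a fixed difference realization with threshold $t$; bipartiteness gives $|\alpha-\beta|,|\gamma-\delta|<t$ as a free input, since each color class is stable and hence every two vertices on the same side are non-adjacent.

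For (a)$\Rightarrow$(b), I would suppose the forbidden configuration $x_1y_1,x_2y_2\in E$ and $x_1y_2,x_2y_1\notin E$ and derive a contradiction. The identity $(\alpha-\gamma)+(\beta-\delta)=(\alpha-\delta)+(\beta-\gamma)$ forces the left-hand side to satisfy $|(\alpha-\gamma)+(\beta-\delta)|\le|\alpha-\delta|+|\beta-\gamma|<2t$, whence $\alpha-\gamma$ and $\beta-\delta$ must have opposite signs (otherwise that absolute value would be at least $|\alpha-\gamma|+|\beta-\delta|\ge 2t$). Relabelling $x_1\leftrightarrow x_2$ and $y_1\leftrightarrow y_2$ if necessary to assume $\alpha-\gamma\ge t$ and $\delta-\beta\ge t$, and using $\gamma-\delta>-t$, I obtain $\alpha-\beta=(\alpha-\gamma)+(\gamma-\delta)+(\delta-\beta)>t+(-t)+t=t$, contradicting $|\alpha-\beta|<t$.

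For (b)$\Rightarrow$(c), I would argue the contrapositive on the $X$-side (the $Y$-side is symmetric). Let $H$ be an induced subgraph with no isolated vertex and no dominating vertex in $X\cap V(H)$. Picking $x_1\in X\cap V(H)$ of maximum $H$-degree, the non-dominance supplies $y_2\in Y\cap V(H)$ with $x_1y_2\notin E$, and since $y_2$ is not isolated in $H$ it has a neighbor $x_2\ne x_1$, witnessing $N_H(x_2)\not\subseteq N_H(x_1)$. Because $|N_H(x_2)|\le|N_H(x_1)|$ by the choice of $x_1$, there must exist $y_1\in N_H(x_1)\setminus N_H(x_2)$, and the quadruple $x_1,x_2,y_1,y_2$ violates (b).

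For (c)$\Rightarrow$(a), I would first upgrade (c) to the structural assertion that the neighborhoods on each side of $G$ form a nested chain: in the non-trivial case $\deg(x_i),\deg(x_j)>0$, the subgraph induced by $\{x_i,x_j\}\cup N(x_i)\cup N(x_j)$ has no isolated vertices, so (c) supplies a dominating $X$-vertex, necessarily $x_i$ or $x_j$, forcing $N(x_i)\subseteq N(x_j)$ or the reverse. Ordering $X=\{x_1,\dots,x_p\}$ and $Y=\{y_1,\dots,y_q\}$ by decreasing degree then yields a non-increasing $f\colon\{1,\dots,p\}\to\{0,1,\dots,q\}$ with $x_iy_j\in E$ iff $j\le f(i)$, and an explicit difference realization is $w(x_i)=\tfrac12+f(i)$, $w(y_j)=-(q+1-j)$, $t=q+1$, for which $|w|<t$ is immediate and the condition $w(x_i)-w(y_j)\ge t$ reduces to $j\le f(i)$. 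I expect the nesting step to be the main obstacle, as it is the place where the strength of (c) over mere absence of isolated vertices really matters; once nesting is in hand, the explicit calibration of weights is routine.
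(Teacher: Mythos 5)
The paper does not prove this statement at all: it is quoted verbatim as a known result from Mahadev and Peled's monograph \cite{Mahadev} and used as a black box, so there is no internal proof to compare against. Your argument is a correct, self-contained proof along the standard lines of the literature, and all three implications check out. In (a)$\Rightarrow$(b) the parallelogram identity $(\alpha-\gamma)+(\beta-\delta)=(\alpha-\delta)+(\beta-\gamma)$ together with the stability of $X$ and $Y$ (which legitimately gives $|\alpha-\beta|,|\gamma-\delta|<t$ for any difference realization) does force the contradiction as you describe. In (b)$\Rightarrow$(c) the counting step is sound: from $|N_H(x_1)|\ge|N_H(x_2)|$ and $N_H(x_2)\setminus N_H(x_1)\ne\emptyset$ one gets $|N_H(x_1)\setminus N_H(x_2)|\ge|N_H(x_2)\setminus N_H(x_1)|\ge 1$, and since $H$ is induced the four adjacencies transfer to $G$. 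In (c)$\Rightarrow$(a) the nesting argument is exactly the right use of condition (c), and only $X$-side nesting is needed to obtain the staircase description $x_iy_j\in E$ iff $j\le f(i)$; your calibration $w(x_i)=\tfrac12+f(i)$, $w(y_j)=-(q+1-j)$, $t=q+1$ works. The one point you should state explicitly rather than leave implicit is that the constructed weights also render same-side pairs non-adjacent, i.e.\ $|w(x_i)-w(x_{i'})|=|f(i)-f(i')|\le q<t$ and $|w(y_j)-w(y_{j'})|\le q-1<t$; the difference-graph definition imposes the adjacency criterion on \emph{all} pairs, not just cross pairs. That is a trivial verification, not a gap.
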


The {\em conjugate sequence} of a sequence $C=[c_{1},c_{2},\ldots,c_{n}]$ is the sequence $C^{*}=[c^{*}_1,$ $c^{*}_2,\ldots,c^{*}_n]$ in which $c^{*}_{i}=|\{j:c_{j}\geq i\}|$. The following result is one characterization of difference graphs by the degree sequences.

\begin{theorem}[\cite{Mahadev}]\label{le6}
A pair of  non-negative and non-increasing integer sequences $D(X)=[d_{G}(x_1),d_{G}(x_2),\ldots,d_{G}({x_{|X|}})]$ and $D(Y)=[d_{G}(y_1),d_{G}(y_2),\ldots,d_{G}({y_{|Y|}})]$ with $d_{G}(y_1)\leq|X|$ is a pair of degree sequences of a difference graph $G$ with bipartition $(X,Y)$ if and only if $D(X)=D^{*}(Y)$, where $D^{*}(Y)$ is the conjugate of $D(Y)$.
\end{theorem}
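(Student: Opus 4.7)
The plan is to convert the algebraic condition $D(X)=D^{*}(Y)$ into the nested-neighborhood structure characteristic of difference graphs, using Theorem~\ref{le5} as the bridge between the two viewpoints.

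For necessity, suppose $G$ is a difference graph with bipartition $(X,Y)$. I would first show that the family $\{N_G(y):y\in Y\}$ is nested: if two neighborhoods were incomparable, one could extract $x\in N_G(y)\setminus N_G(y')$ and $x'\in N_G(y')\setminus N_G(y)$ to produce edges $xy, x'y'\in E(G)$ and non-edges $xy', x'y\notin E(G)$, contradicting condition (b) of Theorem~\ref{le5}. Ordering $Y$ so that $d_G(y_1)\geq d_G(y_2)\geq\cdots$ turns the nested chain into $N_G(y_1)\supseteq N_G(y_2)\supseteq\cdots$. Now list $X$ by non-increasing degree as $x_1,x_2,\ldots,x_{|X|}$, breaking any ties consistently with this chain; one then checks that $N_G(y_j)$ is an initial segment $\{x_1,\ldots,x_{d_G(y_j)}\}$ for every $j$. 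The counting identity
\[
d_G(x_i) \;=\; |\{j : x_i\in N_G(y_j)\}| \;=\; |\{j : d_G(y_j)\geq i\}|
\]
recognizes the right-hand side as the $i$-th entry of $D^{*}(Y)$, which yields $D(X)=D^{*}(Y)$.

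For sufficiency, assume $D(X)=D^{*}(Y)$ with $d_G(y_1)\leq|X|$. I would build $G$ explicitly by making $y_j$ adjacent to $x_1, x_2, \ldots, x_{d_G(y_j)}$, which is legal because $d_G(y_j)\leq d_G(y_1)\leq|X|$. The $Y$-degrees are correct by construction, and on the $X$-side the number $|\{j : d_G(y_j)\geq i\}|$ is the $i$-th entry of $D^{*}(Y)$, which equals $d_G(x_i)$ by hypothesis. To verify that $G$ satisfies condition (b) of Theorem~\ref{le5}, assume for contradiction that there exist $x_a, x_c\in X$ and $y_b, y_d\in Y$ with $x_a y_b, x_c y_d \in E(G)$ but $x_a y_d, x_c y_b \notin E(G)$; the edges give $a\leq d_G(y_b)$ and $c\leq d_G(y_d)$ while the non-edges give $a > d_G(y_d)$ and $c > d_G(y_b)$, producing the impossible chain $a > d_G(y_d) \geq c > d_G(y_b) \geq a$.

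The main obstacle, in my view, is the bookkeeping in the necessity direction: one must pick a single ordering of $X$ that is simultaneously compatible with the entire nested chain of $Y$-neighborhoods and with the requirement that degrees on $X$ decrease. Once the ordering is pinned down, the conjugacy identity $D(X)=D^{*}(Y)$ falls out as a pure counting statement, and the sufficiency direction is essentially free once the explicit construction is written down.
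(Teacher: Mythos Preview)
The paper does not supply its own proof of this theorem: it is quoted from Mahadev and Peled \cite{Mahadev} as a known characterization and used as a black box in the proof of Theorem~\ref{th3}. There is therefore no in-paper argument to compare yours against.

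That said, your proposal is correct and is the standard argument. Two small remarks. In the necessity direction, the sentence ``breaking any ties consistently with this chain'' is the only delicate point, and it can be made cleaner: rather than first ordering $X$ by degree and then worrying about ties, order $X$ directly so that each nested set $N_G(y_j)$ becomes an initial segment (this is possible precisely because the family is a chain under inclusion); the counting identity $d_G(x_i)=|\{j:d_G(y_j)\ge i\}|$ then shows \emph{a posteriori} that this ordering is non-increasing in degree. In the sufficiency direction, your forbidden-configuration check is clean; you might also note, as a one-line alternative, that the constructed neighborhoods $N_G(y_j)=\{x_1,\ldots,x_{d_G(y_j)}\}$ are nested by construction, which already gives condition~(b) of Theorem~\ref{le5}.

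One purely notational caveat: the paper's definition of the conjugate makes $D^{*}(Y)$ a sequence of length $|Y|$, whereas $D(X)$ has length $|X|$; the equality $D(X)=D^{*}(Y)$ is to be read after padding with zeros, and the hypothesis $d_G(y_1)\le |X|$ guarantees that all nonzero entries of $D^{*}(Y)$ fit within the first $|X|$ positions. This is implicit in your construction but worth stating once.
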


We need the next lemma to prove Theorem \ref{th4}.

\begin{lemma}\label{le8}
Let $G$ be an $(n,m)$-graph with $n\geq 2$ and $1\leq m\leq {n \choose 2}$. If $I_{d}(G)$ attains the minimum value among $(n,m)$-graphs, then $G$ is a threshold graph.
\end{lemma}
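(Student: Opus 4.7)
The plan is to argue by contradiction, using the forbidden-subgraph characterisation of threshold graphs in Theorem~\ref{th5}(b) together with the majorisation tool of Theorem~\ref{le1}. Suppose $G$ attains the minimum of $I_d$ over $(n,m)$-graphs but is not threshold. By Theorem~\ref{th5}(b) there are four distinct vertices $u,v,w,x \in V(G)$ with $uv,wx \in E(G)$ and $uw,vx \notin E(G)$; whether or not $ux$ and $vw$ are edges will be irrelevant for what follows. I will exhibit an $(n,m)$-graph $G'$ with $D(G')\succ D(G)$, so that Theorem~\ref{le1} yields $I_d(G')<I_d(G)$, contradicting the choice of $G$.

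The key step is a single edge swap. Observe that the configuration is symmetric under the simultaneous relabelling $u\leftrightarrow x$, $v\leftrightarrow w$, so without loss of generality $d_G(u)\geq d_G(x)$. I then set
\[
G' \;=\; G - wx + uw.
\]
Since $wx\in E(G)$ and $uw\notin E(G)$, the graph $G'$ is simple, has the same vertex set as $G$, and contains exactly $m$ edges; thus $G'$ is an $(n,m)$-graph. The only degrees that change are $d_{G'}(u)=d_G(u)+1$ and $d_{G'}(x)=d_G(x)-1$, while every other vertex (in particular $v$ and $w$) retains its degree.

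To finish, I verify that the majorisation $D(G')\succ D(G)$ is strict. This is a standard ``anti-Robin-Hood'' transfer: moving one unit of mass from a coordinate of value $d_G(x)$ into a coordinate of value $d_G(u)\geq d_G(x)$ preserves the total $\sum_i d_{G'}(v_i)=\sum_i d_G(v_i)=2m$ and weakly increases every partial sum of the non-increasing rearrangement, with strict increase at some prefix (intuitively, at the rank just before $u$'s new position). Hence $D(G')\succ D(G)$, and Theorem~\ref{le1} delivers $I_d(G')<I_d(G)$, contradicting the minimality of $G$. The only points requiring genuine care are that the swap is always legal and that the transfer moves mass in the correct direction; Theorem~\ref{th5}(b) supplies exactly the edge/non-edge pattern needed, and the WLOG reduction on $(d_G(u),d_G(x))$ fixes the direction, so I do not expect any additional obstacle.
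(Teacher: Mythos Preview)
Your proof is correct and follows essentially the same approach as the paper: assume $G$ is not threshold, use Theorem~\ref{th5}(b) to find the forbidden configuration, perform a single edge swap that transfers one unit of degree from a lower-degree vertex to a higher-degree one, and invoke Theorem~\ref{le1} on the resulting strict majorisation. The only differences are cosmetic (labelling of vertices and which pair of non-edges is named), and you supply a bit more justification for the WLOG step and the strictness of the majorisation than the paper does.
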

\begin{proof}
Suppose that $G$ is not a threshold graph. By Theorem \ref{th5} (b), there are four vertices $u,v,w,x\in V(G)$ such that $uv, wx\in E(G)$ and $ux, vw \notin E(G)$.  We may assume without loss of generality that $d_{G}(v)\geq d_{G}(x)$. Set $G'=G-wx+wv$. Hence $D(G')\succ D(G)$. By Theorem \ref{le1}, we have $I_{d}(G')< I_{d}(G)$, a contradiction.
\end{proof}

To prove Theorem \ref{th3}, we need the next lemma.

\begin{lemma}\label{le7}
Let $G$ be an $(n,m)$-bipartite graph with $n\geq 2$ and $1\leq m\leq \lceil\frac{n}{2}\rceil\lfloor\frac{n}{2}\rfloor$. If $I_{d}(G)$ attains the minimum value among $(n,m)$-bipartite graphs, then $G$ is a difference graph.
\end{lemma}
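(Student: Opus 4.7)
The plan is to mirror the proof of Lemma \ref{le8}, substituting the difference-graph characterization of Theorem \ref{le5} for the threshold-graph characterization of Theorem \ref{th5}, and using the bipartite analogue of the forbidden induced configuration.

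Suppose, for contradiction, that $G$ is an entropy-minimizing $(n,m)$-bipartite graph with bipartition $(X,Y)$ that is not a difference graph. By Theorem \ref{le5}\,(b), there exist $x_1,x_2\in X$ and $y_1,y_2\in Y$ with $x_1y_1, x_2y_2\in E(G)$ and $x_1y_2, x_2y_1\notin E(G)$. Without loss of generality I assume $d_G(x_1)\geq d_G(x_2)$ (otherwise I interchange the roles of $(x_1,y_1)$ and $(x_2,y_2)$). I then form $G' = G - x_2y_2 + x_1y_2$. Since only cross-edges are modified, $G'$ is again an $(n,m)$-bipartite graph with the same bipartition $(X,Y)$.

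The degree sequence changes only at $x_1$ and $x_2$, with $d_{G'}(x_1)=d_G(x_1)+1$ and $d_{G'}(x_2)=d_G(x_2)-1$, while every other degree is preserved. Because $d_G(x_1)\geq d_G(x_2)$, transferring one unit of degree from the smaller-degree vertex to the larger-degree vertex strictly increases every partial sum of the sorted degree sequence up to (but not including) the last, so $D(G')\succ D(G)$. Theorem \ref{le1} then gives $I_d(G')<I_d(G)$, contradicting the minimality of $G$.

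The only delicate step is verifying that the 2-switch is legal and that it genuinely produces strict majorization. Legality is ensured by the defining relations of the forbidden configuration: $x_1y_2\notin E(G)$ allows the addition, and $x_2y_2\in E(G)$ allows the removal. Strict majorization follows from the elementary fact that if $a\geq b$ then the multiset $\{a+1,b-1\}$ strictly majorizes $\{a,b\}$ (with all other entries held fixed). Beyond these routine checks, the argument is a direct bipartite adaptation of the proof of Lemma \ref{le8}, so I expect no substantive obstacle.
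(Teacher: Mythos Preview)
Your proof is correct and follows exactly the same route as the paper: assume $G$ is not a difference graph, invoke Theorem \ref{le5}\,(b) to find the $2\times 2$ configuration, perform the edge switch $G'=G-x_2y_2+x_1y_2$ with $d_G(x_1)\ge d_G(x_2)$, and conclude $D(G')\succ D(G)$ and hence $I_d(G')<I_d(G)$ via Theorem \ref{le1}. You include a bit more justification for the strict majorization and the legality of the switch than the paper does, but the argument is otherwise identical.
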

\begin{proof}
Let $(X,Y)$ be the bipartition of $G$. Suppose that $G$ is not a difference graph. By Theorem \ref{le5}(b), there are four vertices $x_1,x_2\in X$ and $y_1,y_2\in Y$  such that $x_1y_1,x_2y_2 \in E(G)$ and $x_1y_2,x_2y_1 \notin E(G)$.  We may assume without loss of generality that $d_{G}(x_1)\geq d_{G}(x_2)$. Set $G'=G-x_2y_2+x_1y_2$. Hence $D(G')\succ D(G)$. By Theorem \ref{le1}, we have $I_{d}(G')< I_{d}(G)$, a contradiction.
\end{proof}

\section{Proofs}

\begin{proof}[Proof of Theorem \ref{th4}]
Suppose that $G^{*}$ is an $(n,m)$-graph such that $I_{d}(G^{*})$ attains the minimum value among graphs with $n$ vertices and $m$ edges. It follows from Lemma \ref{le8} that $G^{*}$ is a threshold graph. And by Theorem \ref{th5} (c), the graph $G^{*}$ is a split graph $G(K,S)$ (i.e., its vertices can be partitioned into a clique $K$ and a stable set $S$) and the neighborhoods of the vertices of $S$ are nested.
If $K$ is not the maximum clique, then there exists a vertex $v\in S$ in the maximum clique.
Since $S$ is the stable set, there is at most one vertex $v\in S$ in the maximum clique. This implies that vertices of $G^{*}$ can be partitioned into a clique $K'=K\cup \{v\}$ and a stable set $S'=S\setminus \{v\}$. So we may assume $K$ is the maximum clique.
Let $k=|K|$ and $t=m-{|K| \choose 2}$.   Then we have $k=\omega(G^{*})$. It follows from $m\geq 1$ that $k\geq 2$. Since $m-{k\choose 2}\geq 0$, it is easy to get that $k\leq k^{*}$. Thus $2\leq k\leq k^{*}$.

Denote by $N(S)$ be the set of neighbors of vertices in $S$, and $H$ the subgraph of $G$ induced by all edges between $N(S)$ and $S$. So $H$ is a bipartite graph. Let $(X,Y)$ be the bipartition of $H$ in which $X=\{x_1,x_2,\ldots,x_{|X|}\} \subseteq K$ and $Y=\{y_1,y_2,\ldots,y_{|Y|}\} \subseteq S$.
If $m={k\choose 2}$ (i.e., $k= k^{*}$ and $t=t^{*}=0$), then $G^{*}\cong K_{k^{*}}\cup \overline{K}_{n-k^{*}}$.
So we consider $m>{k\choose 2}$ (i.e., $t\geq 1$) in the following. For $\sum_{j=1}^{|X|}{z_j}=t\geq |X|$ and $k\geq 2$,
we analyze conditional extremums of the following function
\begin{align*}
f({z_1},z_2,\ldots,{z_{|X|}})=\sum_{j=1}^{|X|}{(z_j+k-1)}\log {(z_j+k-1)}-\sum_{j=1}^{|X|}{z_j}\log {z_j}
\end{align*}
and corresponding Lagrangian function
\begin{align*}
L({z_1},z_2,\ldots,z_{|X|},\lambda)=f({z_1},z_2,\ldots,{z_{|X|}})+\lambda\left(\sum_{j=1}^{|X|}{z_j}-t\right)
\end{align*}
with the additional boundary condition ${z_j}\geq 1$. The function is well-defined and differentiable with respect to each of its arguments on the closed region, so the extremal values can be either critical points or on the boundary. By direct calculation, we obtain
\begin{align*}
\frac{\partial}{\partial \lambda}L=\sum_{j=1}^{|X|}{z_j}-t=0
\end{align*}
and
\begin{align*}
\frac{\partial}{\partial{z_j}}L=\log {(z_j+k-1)}-\log{z_j}+\lambda=0
\end{align*}
for $j=1,2,\ldots,|X|$.
It follows from this set of equations that the unique critical point satisfies ${z_1}=z_2=\cdots={z_{|X|}}=\frac{t}{|X|}\geq 1$. Using the second order conditions, this is a local maximum, as
\begin{align*}
\frac{\partial^2}{\partial{z^{2}_j}}L=\frac{1}{(z_j+k-1)\ln 2}-\frac{1}{z_j\ln2}<0,
\end{align*}
where $k\geq 2$.

For variables $d_{H}(x_1),d_{H}(x_2),\ldots,d_{H}(x_{|X|})$ satisfying $\sum_{j=1}^{|X|}{z_j}=t$, the function\\
$f(d_{H}(x_1),d_{H}(x_2),\ldots,d_{H}(x_{|X|}))$ attains the maximum value if and only if $d_{H}(x_j)=\frac{t}{|X|}$ for $j=1,2,\ldots,|X|$.
And by Theorem \ref{th3}, we have
\begin{align}
\nonumber
h_{d}(G^{*})=&h_{d}(K_k)+h_{d}(H)-|X|(k-1)\log(k-1)\\
\nonumber
         &-\sum_{j=1}^{|X|}d_{H}(x_j)\log d_{H}(x_j)\\
\nonumber
         &+\sum_{j=1}^{|X|}(d_{H}(x_j)+k-1)\log (d_{H}(x_j)+k-1)\\
\nonumber
     \leq&k(k-1)\log(k-1)+t\log t-|X|(k-1)\log(k-1)\\
         &-t\log \frac{t}{|X|}+|X|\left(\frac{t}{|X|}+k-1\right)\log\left(\frac{t}{|X|}+k-1\right),
\end{align}
the equality holds in this inequality if and only if $d_{H}(x_j)=\frac{t}{|X|}$ for $j=1,2,\ldots,|X|$.

Let $p(x)=-x(k-1)\log(k-1)-t\log \frac{t}{x}+x(\frac{t}{x}+k-1)\log(\frac{t}{x}+k-1)$ for $1 \leq x\leq t$. By calculation, we have
\begin{align*}
\frac{d}{dx}p=&(k-1)\log\left(1+\frac{t}{x(k-1)}\right)>0.
\end{align*}
This implies that $p(x)$ is rigidly monotonically increasing for $1 \leq x\leq t$. Let $t'=\min \{k-1,t\}$. Since $k=\omega(G^{*})$, $|X|\leq t^{'}$. Therefore, we have
\begin{align}
\nonumber
&k(k-1)\log(k-1)+t\log t-|X|(k-1)\log(k-1)\\
\nonumber
&-t\log \frac{t}{|X|}+|X|\left(\frac{t}{|X|}+k-1\right)\log\left(\frac{t}{|X|}+k-1\right)\\
\nonumber
      \leq&k(k-1)\log(k-1)+t\log t-t'(k-1)\log(k-1)\\
         &-t\log \frac{t}{t'}+t'\left(\frac{t}{t'}+k-1\right)\log\left(\frac{t}{t'}+k-1\right),
\end{align}
the equality holds in this inequality if and only if $|X|=t'$.

Let $g(y)=y(y-1)\log (y-1)-t'(y-1)\log(y-1)+t'(\frac{t}{t'}+y-1)\log(\frac{t}{t'}+y-1)$ for $2 \leq y\leq k^{*}$. By calculation, we have
\begin{align*}
\frac{d}{d y}g=&\frac{t'\ln\left(\frac{t'(y-1)+t}{t'}\right)+(2y-t'-1)\ln(y-1)+2y}{\ln 2}\\
  >&{t'\ln \left(\frac{t'+t}{t'}\right)}\\
  >&0.
\end{align*}
This implies that $g(y)$ is rigidly monotonically increasing for $2 \leq y\leq k^{*}$. So we have
\begin{align}
\nonumber
&k(k-1)\log(k-1)+t\log t-t'(k-1)\log(k-1)\\
\nonumber
         &-t\log \frac{t}{t'}+t'\left(\frac{t}{t'}+k-1\right)\log\left(\frac{t}{t'}+k-1\right)\\
\nonumber
         \leq& k^{*}(k^{*}-1)\log(k^{*}-1)+t\log t-t'(k^{*}-1)\log(k^{*}-1)\\
         &-t\log \frac{t}{t'}+t'\left(\frac{t}{t'}+k^{*}-1\right)\log\left(\frac{t}{t'}+k^{*}-1\right),
\end{align}
the equality holds in this inequality if and only if $k=k^{*}$.

If $t=t'$, then we have
\begin{align}
\nonumber
         & k^{*}(k^{*}-1)\log(k^{*}-1)+t\log t-t'(k^{*}-1)\log(k^{*}-1)\\
\nonumber
         &-t\log \frac{t}{t'}+t'\left(\frac{t}{t'}+k^{*}-1\right)\log\left(\frac{t}{t'}+k^{*}-1\right)\\
         =& t'k^{*}\log k^{*}+(k^{*}-t')(k^{*}-1)\log(k^{*}-1)+t'\log t'.
\end{align}
We prove $t'=t$ (i.e., $t<k$) when $k=k^{*}$. It follows from $k^{*} =\max\{k: {k \choose 2}\leq m\}$ that $k^{*}=\left\lfloor\frac{\sqrt{8m+1}+1}{2}\right\rfloor$. Suppose that $t\geq k$ for $k=\left\lfloor\frac{\sqrt{8m+1}+1}{2}\right\rfloor$. So we have $m-{k \choose 2}\geq k$ for $k=\left\lfloor\frac{\sqrt{8m+1}+1}{2}\right\rfloor$. This implies $k\leq\frac{\sqrt{8m+1}-1}{2}$ which contradicts $k=\left\lfloor\frac{\sqrt{8m+1}+1}{2}\right\rfloor$. So we have $t'=t$ when $k=k^{*}$.  It follows from $k=k^{*}$ that $t'=t=m-{k \choose 2}=m-{k^{*} \choose 2}=t^{*}$. We consider $m>{k\choose 2}$. This implies $\sigma({t^{*}})=1$.
And by (1), (2), (3), (4) and (5), we have
\begin{align*}
I_{d}(G)\geq \log(2m)-\frac{t^{*}k^{*}\log k^{*}+(k^{*}-t^{*})(k^{*}-1)\log(k^{*}-1)+t^{*}\log t^{*}}{2m},
\end{align*}
the equality holds in this inequality if and only if $$G \cong K(k^{*},t^{*})\cup \overline{K}_{n-k^{*}-1}.$$
\end{proof}

\begin{proof}[Proof of Theorem \ref{th3}]
Suppose that $G^{*}$ is an $(n,m)$-bipartite graph such that $I_{d}(G^{*})$ attains the minimum value among bipartite graphs with $n$ vertices and $m$ edges. Assume that $n\ge m+1$. Let $(X,Y)$ be the bipartition of $G^{*}$. We denote the vertices of $X$ and $Y$ by $x_1,x_2,\ldots,x_{|X|}$ and $y_1,y_2,\ldots,$ $y_{n-|X|}$, respectively. From Lemma \ref{le7}, $G^{*}$ is a difference graph. By Theorem \ref{le5} (c),  without isolated vertices, $G^{*}$ has a domination vertex on each side of the bipartition. Without loss of generality, we assume that $x_1$  is the domination vertex in $X$  and $x_1y_j\in E(G^{*})$  for $j=1,2,\ldots,b$. So we have $d_{G^{*}}(x_1)=b$. If $m=b$, then $G^{*}\cong K_{1,b}\cup \overline{K}_{n-b-1}$.

For $m> b$, we prove the  extremal graphs by induction on $n$.  For $n=2$, $G^{*}\cong K_{1,1}$.
For $n\geq 3$, set $G'=G^{*}-x_{1}y_1-x_{1}y_2-\cdots-x_1y_b$ and assume that $G'\cong K_{q-1,b}\cup \overline{K}_{n-q-b}$.

We have a recursive relation

\begin{small}
\begin{align*}
h_{d}(G^{*})=&\sum_{i=1}^{|X|}d_{G^{*}}(x_i)\log d_{G^{*}}(x_i)+\sum_{j=1}^{n-|X|}d_{G^{*}}(y_j)\log d_{G^{*}}(y_j)\\
        =&\sum_{i=1}^{|X|}d_{G^{*}}(x_i)\log d_{G^{*}}(x_i)+\sum_{j=1}^{b}d_{G^{*}}(y_j)\log d_{G^{*}}(y_j)\\
        =&\sum_{i=2}^{|X|}d_{G^{*}}(x_i)\log d_{G^{*}}(x_i)+\sum_{j=1}^{b}(d_{G^{*}}(y_j)-1)\log (d_{G^{*}}(y_j)-1)+\sum_{j=1}^{b}d_{G^{*}}(y_j)\log d_{G^{*}}(y_j)\\
        &-\sum_{j=1}^{b}(d_{G^{*}}(y_j)-1)\log (d_{G^{*}}(y_j)-1)+b\log b\\
        =&h_{d}(G')+\sum_{j=1}^{b}d_{G^{*}}(y_j)\log d_{G^{*}}(y_j)-\sum_{j=1}^{b}(d_{G^{*}}(y_j)-1)\log (d_{G^{*}}(y_j)-1)+b\log b.
\end{align*}
\end{small}

For $\sum_{j=1}^{b}{z_j}=\sum_{j=1}^{b}d_{G^{*}}(y_j)=m>b$,
we analyze conditional extremums of the following function
\begin{align*}
f({z_1},z_2,\ldots,{z_b})=\sum_{j=1}^{b}{z_j}\log {z_j}-\sum_{j=1}^{b}({z_j}-1)\log ({z_j}-1)
\end{align*}
and corresponding Lagrangian function
\begin{align*}
L({z_1},z_2,\ldots,{z_b},\lambda)=f({z_1},z_2,\ldots,{z_b})+\lambda\left(\sum_{j=1}^{b}{z_j}-m\right)
\end{align*}
with the additional boundary condition ${z_j}> 1$. The function is well-defined and differentiable with respect to each of its arguments on the closed region, so the extremal values can be either critical points or on the boundary. By direct calculation, we obtain
\begin{align*}
\frac{\partial}{\partial \lambda} L=\sum_{j=1}^{b}{z_j}-m=0
\end{align*}
and
\begin{align*}
\frac{\partial}{\partial {z_j}} L=\log {z_j}-\log({z_j}-1)+\lambda=0
\end{align*}
for $j=1,2,\ldots,b$.
It follows from this set of equations that the unique critical point satisfies ${z_1}=z_2=\cdots={z_b}=\frac{m}{b}> 1$. Furthermore, using the second order conditions, it follows that this is a local maximum, as

\begin{align*}
\frac{\partial^2}{\partial z^{2}_{j}}L=-\frac{1}{\ln 2{z_j}({z_j}-1)}<0.
\end{align*}

For variables $d_{G^{*}}(y_1),d_{G^{*}}(y_2),\ldots,d_{G^{*}}(y_b)$, the function $f(d_{G^{*}}(y_1),d_{G^{*}}(y_2),\ldots,d_{G^{*}}(y_b))$ attains the maximum value if and only if $d_{G^{*}}(y_j)=\frac{m}{b}$ for $j=1,2,\ldots,b$. So we have $D(Y)=[q^b]$ in which $q=\frac{m}{b}$. By Theorem \ref{le6}, we have $D(X)=D^{*}(Y)$, where $D^{*}(Y)$ is the conjugate of $D(Y)$. This implies that the degree sequence of $X$ is $D(X)=[b^{q}]$. Obviously, this pair of degree sequences is unigraphic, that is, $K_{q,b}\cup \overline{K}_{n-q-b}$ is the only realization of this pair of degree sequences up to isomorphism.
If $n\geq m+1$, then there exist two integers $b$ and $q$ such that $m=qb$ and $n\geq q+b$. By induction hypothesis, $G^{*}\cong K_{q,b}\cup \overline{K}_{n-q-b}$ for $n\geq m+1$. By calculation, we have
\begin{align*}
I_{d}(K_{q,b}\cup \overline{K}_{n-q-b})=&\log(2m)-\frac{1}{2m}(bq \log q+qb \log b)\\
                                      =&\log (2m)-\frac{1}{2m}(m \log m)\\
                                      =&1+\log{\sqrt{m}}.
\end{align*}
Therefore we have $I_{d}(G)\leq 1+\log{\sqrt{m}}$, the equality holds in the inequality if and only if $G\cong K_{q,b}\cup \overline{K}_{n-q-b}$, where $q$ and $b$ satisfy $qb=m$ and $q+b\leq n$.

Now we consider $n\leq m$ in the following. It is sufficient to prove $I_{d}(G^{*})> 1+\log{\sqrt{m}}$ if $G^{*}\cong K_{q,b}\cup \overline{K}_{n-q-b}$ does not hold for any integers $b$ and $q$. Contrarily, we assume that $I_{d}(G^{*})\leq 1+\log{\sqrt{m}}$. It follows that $I_{d}(G^{*})=I_{d}(G^{*}\cup \overline{K}_{m+1-n})\geq I_{d}(K_{1,m})=1+\log{\sqrt{m}}$. So we have $I_{d}(G^{*})= 1+\log{\sqrt{m}}$. This implies $I_{d}(G^{*}\cup \overline{K}_{m+1-n})$ attains the minimum value among $(m+1,m)$-bipartite graphs and $G^{*}\cup \overline{K}_{m+1-n}$ is not isomorphic to $K_{q,b}\cup \overline{K}_{n-q-b}$ for any integers $q$ and $b$, which contradicts that $G^{*}\cong I_{d}(K_{q,b}\cup \overline{K}_{n-q-b})$ for $n\geq m+1$.
\end{proof}
\section{Concluding remarks}

Among bipartite graphs with $n$ vertices and $m$ edges, we prove that $I_{d}(K_{q,b}\cup \overline{K}_{n-q-b})$ achieves the lower bound $1+\log{\sqrt{m}}$, where $qb=m$ and $q+b\leq n$. As an example, we consider the extremal results on bipartite with at most six vertices. Before we list the results, we define a special bipartite graph. Let $n$, $m$ and $b$ be integers with $n\geq 2$ and $\lceil\frac{n}{2}\rceil\lfloor\frac{n}{2}\rfloor \geq m\geq b\geq 1$. Set $q=\lfloor\frac{m}{b}\rfloor$, $r=m-b\lfloor\frac{m}{b}\rfloor$.
Let $B(n,m,b)$ be the $(n,m)$-bipartite graph with bipartition $(X,Y)$, where $X=\{x_1,x_2,\ldots,$ $x_{|X|}\}$, $Y=\{y_1,y_2,\ldots,y_{|Y|}\}$, $|X|\geq q+\sigma(r)$, $|Y|\geq b$, $x_iy_j \in E(B(n,m,b))$ for $i=1,2,\ldots, q$, $j=1,2,\ldots, b$ and $x_{q+1}y_s \in E(B(n,m,b))$ for $s=1,2,\ldots,r$. The bipartite graphs attaining the minimum value of degree-based entropy among $(n,m)$-bipartite graphs are listed in the following table for $n=2,3,\ldots,6$.

\begin{center}
{Table 1: The bipartite graphs with the minimum value of degree-based entropy.}\\
\medskip
\begin{tabular}{|c|c|c|c|c|c|}
  \hline
  \diagbox{$m$}{$n$} & 2 & 3 & 4 & 5 & 6 \\
  \hline
  1 & $K_{1,1}$ &  $K_{1,1}\cup \overline{K}_1$ & $K_{1,1}\cup \overline{K}_2$ & $K_{1,1}\cup\overline{K}_3$ & $K_{1,1}\cup\overline{K}_4$ \\
  \hline
  2 &  &  $K_{1,2}$ & $K_{1,2}\cup \overline{K}_1$ &  $K_{1,2}\cup \overline{K}_2$ &  $K_{1,2}\cup \overline{K}_3$ \\
  \hline
  3 &  &  &  $K_{1,3}$ &  $K_{1,3}\cup \overline{K}_1$ &  $K_{1,3}\cup \overline{K}_2$ \\
  \hline
  4 &  &  &  $K_{2,2}$ &  $K_{2,2}\cup \overline{K}_1$ and $K_{1,4}$&  $K_{2,2}\cup \overline{K}_2$ and $K_{1,4}\cup \overline{K}_1$ \\
  \hline
  5 &  &  &  & $B(5,5,3)$ & $K_{1,5}$ \\
  \hline
  6 &  &  &  & $K_{2,3}$ &  $K_{2,3}\cup \overline{K}_1$ \\
  \hline
  7 &  &  &  &  & $B(6,7,4)$ \\
  \hline
  8 &  &  &  &  & $K_{2,4}$ \\
  \hline
  9 &  &  &  &  & $K_{3,3}$ \\
  \hline
\end{tabular}
\end{center}
\medskip
By observation, we find $I_{d}(B(6,7,3))<I_{d}(B(6,7,4))$. Does $I_{d}(B(n,m,b))$ decrease as the variable $b$ increases for any given $n$ and $m$? The answer is no since $I_{d}(B(7,7,4))<I_{d}(B(7,7,3))<I_{d}(B(7,7,5))$. Let $\mathcal{B}(n,m,b)$ be the set of $(n,m)$-bipartite graphs satisfying that the maximum degree of one part is $b$. Does $G\cong B(n,m,b)$ hold if $I_{d}(G)$ attains the minimum value among all graphs in $\mathcal{B}(n,m,b)$? The answer is no since $I_{d}(B(7,10,4))>I_{d}(B(7,10,3))$ and $\mathcal{B}(7,10,3)\subseteq \mathcal{B}(7,10,4)$ up to isomorphism. So we pose a general problem as follows.

\begin{problem}
If there does not exist a complete bipartite graph $K_{q,b}$ satisfying $qb=m$ and $q+b\leq n$, then how to find the ones attaining the minimum degree-based entropy?
\end{problem}

\end{document}